\newtheorem{theorem}{Theorem}[section]
\newtheorem{definition}[theorem]{Definition}
\newtheorem{lemma}[theorem]{Lemma}
\newtheorem{prop}[theorem]{Proposition}
\newtheorem{corollary}[theorem]{Corollary}
\newtheorem*{remark}{Remark}
\renewcommand{\epsilon}{\varepsilon}
\DeclareMathAlphabet{\mathpzc}{OT1}{pzc}{m}{it}
\newcommand{\Z}{\mathbb{Z}}
\newcommand{\C}{\mathbb{C}}
\renewcommand{\qed}{$\hfill \square$ \smallskip \\}
\renewcommand{\phi}{\varphi}
\newcommand{\F}{\mathscr{F}}
\newcommand{\s}{\mathfrak{s}}
\renewcommand{\bar}{\overline}
\begin{document}
\thispagestyle{empty}
\title[Computable bounds for Rasmussen's concordance invariant]{Computable bounds for Rasmussen's concordance invariant}
\author{Andrew Lobb}

\begin {abstract} Given a diagram $D$ of a knot $K$, we give easily computable bounds for Rasmussen's concordance invariant $s(K)$.  The bounds are not independent of the diagram $D$ chosen, but we show that for diagrams satisfying a given condition the bounds are tight.  As a corollary we improve on previously known Bennequin-type bounds on the slice genus.
\end {abstract}

\address{Mathematics Department \\ Imperial College London \\ London SW11 7AZ \\ UK}
\email{a.lobb@imperial.ac.uk}

\maketitle

\section{Statement of results}

\subsection{Introduction}

In \cite{R}, Rasmussen defined a homomorphism on the smooth concordance group of knots $\mathcal{C}$

\[ s : \mathcal{C} \rightarrow 2\Z \rm{,} \]

\noindent which he showed had the property that

\[ |s(K)| \leq 2g^*(K) \]

\noindent where we write $g^* (K)$ for the smooth $4$-ball genus (or \emph{slice genus}) of $K$.

The starting point for this paper is the following Theorem of Rasmussen's \cite{R}:

\begin{theorem}
\label{raspos}
For positive knots $K$ (that is, knots which admit a diagram with no negative crossings)

\[ s(K) = 2g^*(K) \rm{.} \]
\end{theorem}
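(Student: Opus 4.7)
The plan is to sandwich $s(K)$ between the slice-genus bound and a Seifert-genus computation read off from any positive diagram $D$ for $K$. Let $D$ have $c$ crossings (all positive) and $s$ Seifert circles, and let $\Sigma$ be the Seifert surface obtained by applying Seifert's algorithm to $D$. Then $\Sigma$ has genus $g(\Sigma) = (c - s + 1)/2$, and pushing $\Sigma$ slightly into the $4$-ball gives $g^*(K) \leq (c - s + 1)/2$. Combined with Rasmussen's inequality this yields the upper estimate $s(K) \leq 2g^*(K) \leq c - s + 1$.

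The heart of the argument is the matching lower bound $s(K) \geq c - s + 1$, which I would extract directly from $D$ via Lee's deformation of Khovanov homology. For any oriented link diagram, Lee constructs a distinguished cycle $\mathfrak{s}_o$, and when the diagram represents a knot the Rasmussen invariant is (up to a fixed normalisation) the quantum grading of the Lee homology class $[\mathfrak{s}_o]$. The crucial feature of a \emph{positive} diagram is that the oriented resolution coincides with the all-$0$ resolution, which sits in the lowest homological degree of the Khovanov cube. In particular no differential hits $\mathfrak{s}_o$ from below, so its Lee homology class cannot be represented by a cycle of strictly larger quantum grading. Working out the grading shifts with $c$ positive crossings, $0$ negative crossings, and $s$ circles in the oriented resolution produces exactly $s(K) = c - s + 1$.

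Chaining the two inequalities gives
\[ 2g^*(K) \geq s(K) = c - s + 1 = 2g(\Sigma) \geq 2g^*(K), \]
so equality holds throughout and in particular $s(K) = 2g^*(K)$. The main obstacle is the middle step: one must carefully track Rasmussen's conventions for the bigrading and verify that for a positive diagram the canonical Lee cycle really attains the claimed chain-level quantum grading rather than some larger one coming from a coboundary. Once this chain-level computation is pinned down the theorem follows immediately from the Seifert-algorithm upper bound and Rasmussen's inequality.
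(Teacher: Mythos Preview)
Your argument is correct and is exactly Rasmussen's original proof, which is what the paper has in mind: the remark immediately following the statement observes that for a positive diagram $C^{-1}(D)=0$, so the filtration level of $[\mathfrak{s}_o]$ is its chain-level grading, and the rest is the Seifert-surface sandwich you describe. The only cosmetic issue is that you use the letter $s$ for both the number of Seifert circles and the Rasmussen invariant; otherwise there is nothing to add.
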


\noindent The point being that in the case of positive knots $K$, the computation of $s(K)$ is a triviality and agrees with twice the genus of an obvious candidate for a minimal-genus slicing surface (namely the one obtained by pushing the Seifert surface given by Seifert's algorithm into the $4$-ball).

The invariant $s(K)$ is equivalent to all the information contained in $\F ^j H^i (K)$, where $\F^j H^i$ is the perturbed version of standard Khovanov homology first defined and studied by Lee \cite{L}.  There is a spectral sequence with $E_2$ page being the standard Khovanov homology of a knot $K$ and $E_\infty$ page being the bigraded group $\F^jH^i(K)/ \F^{j+1}H^i(K)$ and many efforts to compute $s$ for knots other than for positive knots have made use of the existence of spectral sequences (for some nice examples see \cite{Sh}).

However, since it is known that $\F^j H^i (K) = 0$ for $i \not= 0$, to define $s(K)$ only requires knowledge of the partial chain complex

\[ \F^j C^{-1}(D) \stackrel{\partial_{-1}}{\rightarrow} \F^j C^{0}(D) \stackrel{\partial_{0}}{\rightarrow} \F^j C^{1}(D) \rm{,}\]

\noindent where $D$ is a diagram of $K$.  In fact, since explicit representatives for a basis of $\F ^j H^i (K)$ are known at the chain level, one only needs to know the map

\[ \partial_{-1} : \F^j C^{-1}(D) \rightarrow \F^j C^{0}(D) \rm{.} \]

\begin{remark}
For a positive diagram $D$, $C^{-1}(D) = 0$.  This is what made Theorem \ref{raspos} a trivial corollary once the properties of $s$ were established.
\end{remark}

By studying this map we obtain a diagram-dependent upperbound $U(D)$ for $s(K)$.  We also give an error estimate $2\Delta(D)$ for this upperbound.  The resulting lowerbound $U(D) - 2\Delta(D)$ for $s(K)$ improves upon previously known Rudolph-Bennequin-type inequalities.  We give a list of particular cases where $\Delta(D)$ vanishes and so $U(D)$ necessarily agrees with $s(K)$.

\subsection{Results}

The following results are stated for knots, since the Rasmussen invariant is most familiar in this setting.  Some results however admit a generalization to links (via the definition of $s$ for links as found for example in \cite{BW}).  We discuss this in Section \ref{linksection}.

Our results concern an easily-computable number $U(D) \in 2\Z$ which is defined from an oriented knot diagram $D$.  Postponing an explicit description of how to compute $U(D)$ until Definition \ref{maindefn}, we begin by giving some results.

\begin{theorem}
\label{upperbound}
\[ s(D) \leq U(D) \rm{.}\]
\end{theorem}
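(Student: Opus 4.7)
The plan is to reduce the bound $s(D) \leq U(D)$ to an explicit computation of quantum filtration at the chain level. Following Rasmussen \cite{R}, the invariant $s(D)$ is read off from the filtration of the Lee-Rasmussen canonical class $[\s_o + \s_{\bar o}] \in \F^\ast H^0(K)$: up to the conventional additive shift, $s(D) + 1$ equals the largest $q$ for which this class has a representative in $\F^q C^0(D)$. Since only the partial complex $\F^j C^{-1}(D) \stackrel{\partial_{-1}}{\to} \F^j C^0(D)$ is required to determine this filtration, two representatives of the class differ by an element of $\mathrm{im}(\partial_{-1})$, so the problem becomes bounding
\[ \sup_{\alpha \in C^{-1}(D)} \mathrm{fil}\bigl(\s_o + \s_{\bar o} + \partial_{-1}(\alpha)\bigr) \]
from above by $U(D) + 1$.

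The first step is to unpack Definition \ref{maindefn}: I expect $U(D)$ to be defined by isolating a distinguished monomial (enhanced state) $\mu$ appearing with non-zero coefficient in $\s_o + \s_{\bar o}$, whose quantum filtration $U(D) + 1$ is computable from combinatorial data of $D$ (writhe, Seifert circle count, plus additional local information at crossings). The monomial $\mu$ must be engineered so that no image $\partial_{-1}(\alpha)$ can cancel the $\mu$-coefficient of $\s_o + \s_{\bar o}$.

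The second step is the core of the argument: one verifies that for every $\alpha \in C^{-1}(D)$ the chain $\s_o + \s_{\bar o} + \partial_{-1}(\alpha)$ still carries a non-zero coefficient on $\mu$. Because the filtration of a chain is bounded above by the filtration of any of its non-zero summands, this gives $\mathrm{fil}\bigl(\s_o + \s_{\bar o} + \partial_{-1}(\alpha)\bigr) \leq U(D) + 1$ for every $\alpha$. Taking the supremum over $\alpha$ yields $s(D) \leq U(D)$, as required.

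The main obstacle I expect to be verifying that no contribution from $\partial_{-1}$ can cancel the $\mu$-coefficient. In the Khovanov-Lee complex, $\partial_{-1}$ decomposes into the standard filtration-preserving Khovanov edge maps plus the Lee perturbation, which shifts quantum grading by $+4$. A preimage $\alpha$ whose image under $\partial_{-1}$ contributes to $\mu$ must live at a specific vertex of the cube of resolutions adjacent to the oriented resolution, and Definition \ref{maindefn} is presumably designed so that the resulting local combinatorial constraints force any such contribution either to vanish or to cancel in pairs. This crossing-by-crossing local check is where the genuine work of the proof should lie.
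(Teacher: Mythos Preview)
Your outline identifies the right quantity to bound (the filtration of the canonical Lee class) but the mechanism you propose is not the one that works, and your guesses about the shape of the argument and about Definition~\ref{maindefn} are off.

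First, $U(D)$ is not defined by singling out a monomial of $\s_o + \s_{\bar o}$; it is a purely combinatorial number built from the Seifert graph $T(D)$, namely $\#\mathrm{nodes}(T(D)) - 2\#\mathrm{components}(T^-(D)) + w(D) + 1$. So the hoped-for ``distinguished monomial $\mu$ whose coefficient cannot be cancelled by $\partial_{-1}$'' is not sitting there waiting for you, and the proof is not a crossing-by-crossing local check on edge maps. In fact for a general diagram the image $p\circ\partial_{-1}$ is large (its cokernel has dimension $2^{\#\mathrm{components}(T^-(D))}$), and there is no reason a single monomial functional should annihilate it.

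The paper's argument is structural rather than monomial-by-monomial. One first applies Lemma~\ref{ronen}: projecting $C^0(D)$ onto the oriented-resolution summand $H(O(D))\{w(D)\}$ can only raise the filtration of $[\s_o]$, so it suffices to bound the filtration of $[\s_o]$ in $H(O(D))\{w(D)\}/\mathrm{im}(p\circ d_{-1})$. The key step is then to recognise this quotient globally: up to the shift $\{n_+\}$, the map $p\circ d_{-1}$ is exactly the differential $d_{-1}$ of the all-negative diagram $D^-$ obtained by taking the oriented resolution at every positive crossing. The diagram $D^-$ breaks into planar components $D_1^-,\dots,D_l^-$ (one for each component of $T^-(D)$), so the cokernel factors as $\bigotimes_{r=1}^l H^0(D_r^-)$ and $\s_o$ factors as $\s_1\otimes\cdots\otimes\s_l$. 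Finally, Rasmussen's Lemma~3.5 (applied to each negative piece) bounds the filtration of each $[\s_r]$ by $e_r - n_r - 2$; summing and adding back the shift $n_+$ yields exactly $U(D)-1$, hence $s(D)=s_{\min}(D)+1\le U(D)$.

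So the missing idea is the reduction to negative diagrams via resolving the positive crossings, together with the tensor splitting over components of $T^-(D)$; the per-factor bound then comes for free from \cite{R} rather than from any new local analysis of $\partial_{-1}$.
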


Of course, we must remember that $s(D)$ depends only on the isotopy class of the knot represented by $D$, whereas the same is not true of $U(D)$.  Hence in order for the bound of Theorem \ref{upperbound} to be a good bound, we should expect to be forced to give some restrictions on diagrams $D$:

\begin{prop}
\label{posandneg}
The bound of Theorem \ref{upperbound} is tight for positive diagrams $D$ and for negative diagrams $D$.
\end{prop}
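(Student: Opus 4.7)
The plan is to treat the positive and negative cases separately, in each showing that the geometry of the diagram forces the error term $\Delta(D)$ to vanish so that Theorem \ref{upperbound} already gives equality. For positive diagrams, the remark preceding this subsection records the key input: $\F^j C^{-1}(D) = 0$. Consequently $\partial_{-1}$ is the zero map on a trivial domain, so its image cannot shift the filtration of the canonical Lee cocycle, and whatever mechanism defines $\Delta(D)$ from this map must contribute zero. Then $U(D)$ should reduce to the filtration level of the canonical generator computed directly in $\F^j C^0(D)$, which for a positive diagram with $n$ crossings and $O$ Seifert circles works out to $n - O + 1$; by Theorem \ref{raspos} this is $2g^*(K) = s(K)$, and the upperbound of Theorem \ref{upperbound} is therefore tight.

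For negative diagrams, the natural strategy is mirror symmetry. If $D$ is negative then its mirror $\bar D$ is positive, and $s(\bar K) = -s(K)$. Mirroring interchanges the Khovanov--Lee complex of $D$ with the dual of that of $\bar D$, swapping the roles of $\F^j C^{-1}$ and $\F^j C^{1}$. In particular $\F^j C^1(D) = 0$ for a negative diagram, and the dual analogue of $\partial_{-1}$ --- namely $\partial_0 \colon \F^j C^0(D) \to \F^j C^1(D)$ --- has trivial codomain. Either pulling the answer back from the positive case applied to $\bar D$, or re-running the chain-level computation with arrows dualised, should again yield $\Delta(D) = 0$ and $U(D) = s(D)$.

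The main obstacle I anticipate is compatibility with the precise form of Definition \ref{maindefn}: my argument assumes that $U(D)$ and $\Delta(D)$ behave transparently under mirror reflection and that their construction is controlled by the canonical Lee cocycle together with the image of $\partial_{-1}$. If Definition \ref{maindefn} turns out to be less symmetric than this, the negative case will need an independent but structurally parallel argument rather than a one-line appeal to mirroring. Either way, the geometric content of the proof is entirely carried by the vanishing of $\F^j C^{-1}(D)$ in the positive case and of $\F^j C^{1}(D)$ in the negative case, with the rest reducing to unpacking Definition \ref{maindefn} and comparing the resulting number with the standard formula $n - O + 1$ (or its negative) for the Seifert-algorithm slice genus.
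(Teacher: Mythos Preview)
Your argument can be made to work, but it takes a longer route than the paper and rests on a misconception about $\Delta(D)$. The error term $\Delta(D)$ is not extracted from $\partial_{-1}$; it is the purely combinatorial quantity of Definition~\ref{errordefn}. The paper's proof simply verifies $\Delta(D)=0$ directly from the Seifert graph and then applies Theorem~\ref{trees}: for a positive diagram $T^-(D)$ has no edges, so it has $\#\mathrm{nodes}(T(D))$ components, while $T^+(D)=T(D)$ is connected, giving $\Delta(D)=\#\mathrm{nodes}-\#\mathrm{nodes}-1+1=0$; the negative case is symmetric. Theorem~\ref{trees} then yields $s(D)=U(D)$ in one stroke for both cases.

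Your positive-case argument is essentially Rasmussen's own computation that $s(K)=w(D)-\#O(D)+1$ for a positive diagram (note that Theorem~\ref{raspos} as stated only asserts $s(K)=2g^*(K)$; the explicit formula you quote comes from the underlying proof, which is precisely the observation $C^{-1}(D)=0$ that you invoke). That is fine. But your negative-case mirror argument needs the identity $U(D)=-U(\bar D)$, and by the proof of Theorem~\ref{trees} this identity is \emph{equivalent} to $\Delta(D)=0$. So the mirror trick does not bypass the combinatorial check --- it presupposes it. Once you perform that one-line verification, Theorem~\ref{trees} already finishes both cases simultaneously and the chain-level considerations about $C^{\pm 1}(D)=0$ become superfluous.
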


\begin{prop}
\label{braid}
Let $\epsilon_i \in \{ -1, +1 \}$ for $i = 1, 2, \ldots, n$.  Then if $w$ is any word in the $n$ letters

\[ \{ \sigma_1^{\epsilon(1)}, \sigma_2^{\epsilon(2)}, \ldots, \sigma_n^{\epsilon(n)} \} \]

\noindent and $B$ is a knot diagram which is the closure of the $(n+1)$-stranded braid represented by $w$, then we have

\[ s(B) = U(B) \rm{.} \]
\end{prop}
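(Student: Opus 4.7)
The plan is to deduce Proposition \ref{braid} by combining the upper bound of Theorem \ref{upperbound} with the companion lower bound $U(D) - 2\Delta(D) \leq s(D)$ that the paper promises in the introduction, and showing that $\Delta(B) = 0$ for the class of braid closures under consideration. In other words, I expect $B$ to be one of the ``particular cases where $\Delta(D)$ vanishes'' flagged in the introduction, so the proof will consist in identifying the structural feature of such braids that forces the error term to be trivial.

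To do this, I would first unpack the meaning of $\partial_{-1} \colon \F^j C^{-1}(D) \to \F^j C^{0}(D)$ on a braid closure. The generators of $\F^j C^{-1}(D)$ come from resolutions that differ from the oriented (Seifert) resolution at exactly one crossing. For a braid $B$ on $n+1$ strands arising from a word $w$ in $\{\sigma_1^{\epsilon(1)}, \ldots, \sigma_n^{\epsilon(n)}\}$, the oriented resolution is the canonical braid resolution into $n+1$ horizontal Seifert circles, and changing a single crossing at level $i$ produces a Murasugi-sum-like local modification between the $i$-th and $(i+1)$-st circles. The crucial point is that because every crossing at level $i$ has the same sign $\epsilon(i)$, all such local modifications at level $i$ are of the same topological type.

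The next step is to use this uniformity to analyze $\Delta(B)$. I expect $\Delta(D)$ to be defined as the defect by which a particular set of chains in $\F^j C^{-1}(D)$ fails to hit a canonical target cycle in $\F^j C^{0}(D)$ representing $s(K)$. In the braid setting the Seifert circles come with a natural nested/parallel structure, and Lee's canonical generators at the $0$-chain level are labelled according to the orientation; the image of a single crossing-change at level $i$ under $\partial_{-1}$ modifies precisely two adjacent circles in a way determined by $\epsilon(i)$. I would argue that, level by level, the map $\partial_{-1}$ already sees all the algebraic possibilities that the definition of $U$ is allowed to rule out, so no additional error can accumulate and $\Delta(B) = 0$.

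The main obstacle will be the last step: showing cleanly that the local level-by-level analysis implies global vanishing of $\Delta(B)$. This is where the hypothesis that each generator appears with a \emph{fixed} sign is essential, since it prevents cancellations between positive and negative crossings at the same horizontal level from creating additional elements of $\ker \partial_0 / \operatorname{im}\partial_{-1}$ that would have to be accounted for in $\Delta$. Concretely, I would reduce the global computation to a product of independent local computations, one for each letter position, each of which can be checked by hand using the cases of a single $\sigma_i^{+1}$ or single $\sigma_i^{-1}$ crossing --- essentially the positive and negative cases already covered by Proposition \ref{posandneg}. Once this reduction is in place, Proposition \ref{posandneg} (or rather its local version) supplies the input that makes each local contribution vanish, and summing gives $\Delta(B)=0$ and hence $s(B) = U(B)$.
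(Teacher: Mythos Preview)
Your overall strategy --- show $\Delta(B)=0$ and then invoke the two-sided bound $U(B)-2\Delta(B)\le s(B)\le U(B)$ --- is exactly the paper's approach, and is correct.

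The gap is in how you propose to verify $\Delta(B)=0$. You speculate that $\Delta(D)$ is ``the defect by which a particular set of chains in $\F^j C^{-1}(D)$ fails to hit a canonical target cycle,'' and then outline a chain-level, level-by-level analysis of $\partial_{-1}$. This is a misreading: $\Delta(D)$ is defined purely combinatorially (Definition~\ref{errordefn}) as
\[
\Delta(D)=\#\text{nodes}(T(D))-\#\text{components}(T^-(D))-\#\text{components}(T^+(D))+1,
\]
where $T(D)$ is the Seifert graph. No chain groups enter at all, and your proposed local analysis of $\partial_{-1}$, even if it could be made precise, would not be computing this quantity.

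Once you use the actual definition, the check is the one-line count the paper calls trivial. The oriented resolution of $B$ consists of the $n+1$ closed-up strands, so $T(B)$ has $n+1$ nodes, and the only edges run between adjacent nodes $i$ and $i+1$, all carrying the single sign $\epsilon(i)$ (the knot hypothesis forces each $\sigma_i$ to occur). Deleting the negative edges from this path disconnects it once for each $i$ with $\epsilon(i)=-1$, so
\[
\#\text{components}(T^+(B))=1+\#\{i:\epsilon(i)=-1\},\qquad
\#\text{components}(T^-(B))=1+\#\{i:\epsilon(i)=+1\},
\]
and these sum to $n+2=\#\text{nodes}(T(B))+1$, giving $\Delta(B)=0$. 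That is the whole argument; the reduction to Proposition~\ref{posandneg} you sketch is not needed.
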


\begin{remark}
We note that knots admitting such a braid presentation are known to be fibered \cite{stallings}, so in particular not every knot admits such a presentation.
\end{remark}

\begin{prop}
\label{alternating}
Let $D$ be an alternating diagram of a knot.  Then we have

\[ s(D) = U(D)\rm{.} \]
\end{prop}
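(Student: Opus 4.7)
The plan is to prove the inequality $s(D) \geq U(D)$, since the reverse direction is given by Theorem \ref{upperbound}.

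My first approach would be to show that the error estimate $\Delta(D)$ mentioned in the introduction vanishes whenever $D$ is alternating; this would immediately yield $s(D) \geq U(D) - 2\Delta(D) = U(D)$. The quantity $\Delta(D)$ measures the failure of a canonical chain in $\F^j C^0(D)$ to be closed modulo the image of $\partial_{-1}$, and should decompose as a sum of local contributions associated to the negative crossings of $D$. The alternation condition forces each negative crossing to be flanked along both strands by positive crossings; I would try to use this local rigidity to exhibit pairwise cancellations between the local contributions to $\Delta(D)$.

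As a fall-back, one can argue by mirror symmetry. The mirror $D^!$ of an alternating diagram is again alternating, and Rasmussen's invariant satisfies $s(K^!) = -s(K)$, so applying Theorem \ref{upperbound} to $D^!$ gives $s(D) \geq -U(D^!)$. It then suffices to verify the identity $U(D) + U(D^!) = 0$ for alternating diagrams. Since a crossing is negative in $D$ precisely when it is positive in $D^!$, and since Proposition \ref{posandneg} pins down the value of $U$ in the purely-one-sign case, one expects a crossing-by-crossing analysis to deliver this identity, reducing the problem to essentially the same local combinatorics as in the first approach.

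The main obstacle in either route is the same: isolating the combinatorial feature of alternating diagrams --- most plausibly a consequence of the checkerboard colouring, together with the fact that Seifert's algorithm on an alternating diagram already achieves the Seifert genus of the knot --- which forces the contributions to $\Delta(D)$ (equivalently, the asymmetry between $U(D)$ and $-U(D^!)$) to cancel. Once this feature is identified, the remaining work ought to be a direct local inspection of the map $\partial_{-1}$ in a neighbourhood of each negative crossing of $D$.
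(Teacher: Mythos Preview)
Your overall strategy is exactly the paper's: reduce the proposition to the vanishing of $\Delta(D)$ (equivalently, to the identity $U(D)+U(\bar D)=0$, which as you observe follows from the mirror trick and is in fact the content of Theorem~\ref{trees}). Your instinct that the checkerboard colouring is the relevant combinatorial feature is also correct.

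Where the proposal goes astray is in your description of $\Delta(D)$ and the proposed method of attack. The quantity $\Delta(D)$ is \emph{not} a chain-level obstruction measuring some failure of closure modulo $\mathrm{im}(\partial_{-1})$; it is the purely combinatorial number
\[
\Delta(D)=\#\mathrm{nodes}(T(D))-\#\mathrm{components}(T^-(D))-\#\mathrm{components}(T^+(D))+1,
\]
defined entirely from the Seifert graph. Consequently there is nothing to be gained by a ``local inspection of the map $\partial_{-1}$'' or by hunting for cancellations among chain-level contributions at negative crossings; that machinery was already absorbed into the proof of Theorem~\ref{upperbound}, and by this point the problem is pure graph combinatorics. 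The Seifert-genus fact you mention is likewise irrelevant here.

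The paper's argument is a short region count. For an alternating diagram the complementary regions of $O(D)$ in the plane admit a (unique) two-colouring by signs, compatible with the checkerboard colouring, such that every positive crossing sits in a $+$ region and every negative crossing in a $-$ region. One then checks that the $+$ regions are in bijection with the components of $T^+(D)$ and the $-$ regions with the components of $T^-(D)$. Since the number of complementary regions of a collection of disjoint simple closed curves in the plane exceeds the number of curves by exactly one, this gives
\[
\#\mathrm{components}(T^+(D))+\#\mathrm{components}(T^-(D))=\#\mathrm{nodes}(T(D))+1,
\]
i.e.\ $\Delta(D)=0$. No chain-level work is needed at all.
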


Propositions \ref{posandneg}, \ref{braid}, and \ref{alternating} are each consequences of Theorem \ref{trees} for which we need a few definitions.  Given a diagram $D$ we write $O(D)$ for the oriented resolution.

\begin{definition}
We form a decorated graph $T(D)$, known as the Seifert graph of $D$, as follows:

We start with a node for each component of $O(D)$.  Each crossing in $D$, when smoothed, lies on two distinct components of $O(D)$; for each positive (respectively negative) crossing of $D$ we connect the corresponding nodes by an edge decorated with $+$ (respectively $-$).
\end{definition}

Note that $T(D)$ by itself is not enough to recover the full Khovanov chain complex of the diagram $D$, but if we added extra data of an ordering of the edges at each node, we would be able to recover the full complex.

\begin{definition}
From $T(D)$ we now form two other graphs:

We form a subgraph $T^- (D)$ (respectively $T^+(D)$) from $T(D)$ by removing all edges of $T(D)$ decorated with a $+$ (respectively $-$).
\end{definition}

\begin{definition}
\label{maindefn}
We define the number
\[ U(D) = \#\rm{nodes}(T(D)) - 2\#\rm{components}(T^-(D)) + w(D) + 1 \rm{,}\]

\noindent where $w(D)$ is the writhe of $D$.
\end{definition}

\begin{definition}
\label{errordefn}
We define the number
\[ \Delta(D) = \#\rm{nodes}(T(D)) - \#\rm{components}(T^-(D)) - \#\rm{components}(T^+(D)) + 1 \rm{.}\]
\end{definition}

Then we have

\begin{theorem}
\label{trees}
If $\Delta(D) = 0$ then $s(D) = U(D)$.  In fact we can say more:

\[ U(D) - 2\Delta(D) \leq s(D) \leq U(D) \rm{.} \]
\end{theorem}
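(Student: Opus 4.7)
The plan is to derive the nontrivial lower bound $U(D)-2\Delta(D)\leq s(D)$ by applying Theorem~\ref{upperbound} to the mirror diagram $-D$ and combining with the standard mirror-antisymmetry $s(\bar K)=-s(K)$. The upper half of the sandwich is already Theorem~\ref{upperbound}, and the clause ``if $\Delta(D)=0$ then $s(D)=U(D)$'' follows at once from the two-sided inequality, so only the one lower bound requires work.

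The heart of the argument is the purely combinatorial identity
\[ U(D)+U(-D) \;=\; 2\Delta(D), \]
equivalently $U(D)-2\Delta(D)=-U(-D)$. To verify it: the oriented resolution depends only on strand orientations and not on over/under data, so $T(-D)$ has the same node set as $T(D)$; mirroring reverses every crossing sign and hence swaps the $+$ and $-$ decorations on edges, yielding $\#\mathrm{components}(T^{\pm}(-D))=\#\mathrm{components}(T^{\mp}(D))$; and $w(-D)=-w(D)$. Substituting into Definitions~\ref{maindefn} and~\ref{errordefn}, the identity falls out by direct algebra.

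With the identity in hand, applying Theorem~\ref{upperbound} to the mirror gives $s(-D)\leq U(-D)$, and since $s(-D)=-s(D)$ this is precisely the desired bound
\[ s(D) \;\geq\; -U(-D) \;=\; U(D)-2\Delta(D). \]
The only genuine obstacle is confirming the mirror behaviour of the Seifert graph $T$, which is a routine but essential inspection; once that is in place the remainder of the proof is a one-line manipulation and, in particular, requires no further analysis of the chain complex beyond what was already needed for Theorem~\ref{upperbound}.
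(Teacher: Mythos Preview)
Your proof is correct and follows essentially the same approach as the paper: apply Theorem~\ref{upperbound} to the mirror diagram, use the mirror-antisymmetry $s(\bar{D})=-s(D)$, and combine with the combinatorial identity $U(D)+U(\bar{D})=2\Delta(D)$. You even supply more detail than the paper on why the identity holds (the paper simply declares it ``easy to check''), so nothing further is needed.
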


Theorem \ref{trees} enables us to improve on previously known easily-computable combinatorial lower bounds for the slice genus.  We have:

\begin{corollary}
\label{benny}
\begin{eqnarray*} 2g^*(K) &\geq& s(K) \geq U(D) - 2\Delta(D) \\
&\geq& w(D) - \#\rm{nodes}(T(D)) + 2\#\rm{components}(T^+(D)) -1 \rm{,}
\end{eqnarray*}
\end{corollary}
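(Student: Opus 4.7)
The plan is to observe that the corollary is essentially a concatenation of results already established earlier in the paper, together with an elementary substitution. The first inequality $2g^*(K) \geq s(K)$ is the classical property of Rasmussen's invariant recalled in the introduction. The second inequality $s(K) \geq U(D) - 2\Delta(D)$ is precisely the lower bound given by Theorem \ref{trees} (and is the genuine content of the paper). Both can therefore be quoted directly.

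The only thing left to check is the third inequality, which I expect to be not merely an inequality but an equality, obtainable by plugging Definitions \ref{maindefn} and \ref{errordefn} into the expression $U(D) - 2\Delta(D)$ and cancelling. Writing $V=\#\mathrm{nodes}(T(D))$, $C^- = \#\mathrm{components}(T^-(D))$, and $C^+ = \#\mathrm{components}(T^+(D))$, one has
\[ U(D) = V - 2C^- + w(D) + 1, \qquad \Delta(D) = V - C^- - C^+ + 1, \]
so
\[ U(D) - 2\Delta(D) = (V - 2C^- + w(D) + 1) - 2(V - C^- - C^+ + 1) = w(D) - V + 2C^+ - 1, \]
which is exactly the claimed right-hand side.

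Thus the proof writes itself: quote Rasmussen, quote Theorem \ref{trees}, then perform the above substitution. There is no real obstacle; the only thing to be careful about is the bookkeeping of signs in the substitution, since the $+$ and $-$ decorations on the Seifert graph have a tendency to get swapped when expanding $\Delta(D)$.
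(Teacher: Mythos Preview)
Your proposal is correct and matches the paper's intent: the paper gives no separate proof of Corollary~\ref{benny}, treating it as immediate from Theorem~\ref{trees} together with Rasmussen's bound $|s(K)| \leq 2g^*(K)$ and the definitions of $U(D)$ and $\Delta(D)$. Your observation that the final line is in fact an equality (not merely an inequality) is also correct, and your algebraic check is accurate.
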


\noindent which is stronger than the Rudolph-Bennequin inequalities as proved in \cite{K2}, \cite{P}, and \cite{Sh} (for a nice discussion see \cite{St}).

\begin{proof}{of Propositions \ref{posandneg}, \ref{braid}, and \ref{alternating}.}
This is just a matter of checking that the condition $\Delta(D) = 0$ of Theorem \ref{trees} holds in each case.  This is only a non-trivial check for the case of $D$ being alternating.

Suppose $D$ is an alternating diagram.  The complement of the oriented resolution $O(D)$ is a number of regions of the plane.  If $D$ is not the trivial diagram, there is a unique way to associate to each region either a $+$ or a $-$ such that only positive (respectively negative) crossings of $D$ occur in regions associated with a $+$ (respectively $-$) and such that adjacent regions have different associated signs.  See Figure \ref{alt} for an example.

Then each region with associated sign $+$ (respectively $-$) corresponds to exactly one component of $T^+(D)$ (respectively $T^-(D)$).  Since there is one more region than there are circles of $O(D)$ (or equivalently nodes of $T(D)$) we must have $\Delta(D) = 0$.
\end{proof}

\begin{figure}
\centerline{
{
\psfrag{pnodes}{$2p - 1$ nodes}
\psfrag{+}{$+$}
\psfrag{-}{$-$}
\psfrag{ldots}{$\ldots$}
\psfrag{T(D)}{$T(D)$}
\psfrag{T-(D)}{$T^-(D)$}
\psfrag{T+(D)}{$T^+(D)$}
\includegraphics[height=5in,width=3.7in]{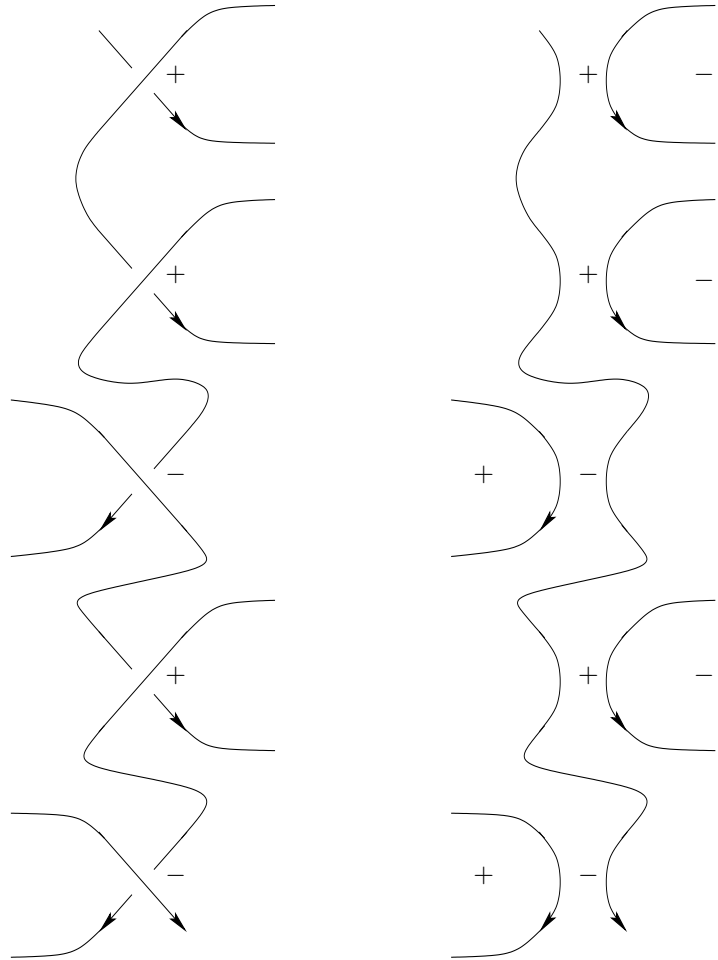}
}}
\caption{On the left of this figure we show part of an alternating knot diagram $D$.  We indicate which crossings are positive and which negative.  On the right of the figure is the oriented resolution $O(D)$ on which we indicate how to uniquely associate $+$ or $-$ to each component of the complement of $O(D)$.}
\label{alt}
\end{figure}

We note that Proposition \ref{alternating} gives a combinatorial formula for the Rasmussen invariant of an alternating diagram.  It is known \cite{L} that the Rasmussen invariant of an alternating knot agrees with the signature of the knot, and there is also known \cite{Tr} a combinatorial formula for the signature of an alternating diagram.  Proposition \ref{alternating} gives an equivalence between these two results.


There is a nice topological interpretation of $\Delta$ which is useful in computing it by hand:

\begin{prop}
Form a graph $G$ which has a node for each component of $T^-(D)$ and a node for each component of $T^+(D)$.  Each circle in $O(D)$ is a member of exactly one component of $T^-(D)$ and exactly one component of $T^+(D)$; for each circle in $O(D)$ let $G$ have an edge connecting the corresponding pair of nodes.

Then $\Delta(D) = b_1(G)$, the first betti number of $G$.
\end{prop}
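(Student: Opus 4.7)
The plan is to use the elementary Euler-characteristic identity
\[ b_1(G) = E(G) - V(G) + c(G), \]
where $c(G)$ denotes the number of connected components of $G$. Directly from the construction in the statement, the vertex set of $G$ is the disjoint union of the set of components of $T^-(D)$ and the set of components of $T^+(D)$, so
\[ V(G) = \#\rm{components}(T^-(D)) + \#\rm{components}(T^+(D)), \]
while the edges of $G$ are in bijection with the circles of $O(D)$, i.e.\ with the nodes of $T(D)$, so $E(G) = \#\rm{nodes}(T(D))$. Comparing with Definition \ref{errordefn}, the proposition reduces to showing that $c(G) = 1$, that is, that $G$ is connected.

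For the connectedness of $G$, first note that $T(D)$ itself is connected: since $D$ is a knot diagram, following the knot's orientation produces a cyclic sequence of Seifert circles and crossings meeting every Seifert circle, and each crossing gives an edge of $T(D)$. Now take any two Seifert circles $s$ and $s'$, and join them in $T(D)$ by a path $s = s_0, s_1, \ldots, s_k = s'$ whose edges are each labelled $+$ or $-$. If the edge from $s_i$ to $s_{i+1}$ is a $+$-edge, then $s_i$ and $s_{i+1}$ lie in a common component $C^+$ of $T^+(D)$, so the $G$-edges arising from $s_i$ and $s_{i+1}$ both have $C^+$ as an endpoint; the analogous statement holds for $-$-edges. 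Iterating, the $G$-edges associated to $s$ and to $s'$ belong to the same component of $G$. Since every vertex of $G$ is by definition a component of $T^\pm(D)$ and therefore contains at least one Seifert circle (giving it positive degree in $G$), no vertex is isolated, and $G$ is connected.

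I do not expect a genuine obstacle here: the argument is combinatorial book-keeping plus a one-line connectedness check. The only point that warrants any care is the assertion that $T(D)$ is connected, which uses that $K$ is a knot (consistent with the standing convention of the paper that results are stated for knots); the proof goes through verbatim for any connected diagram.
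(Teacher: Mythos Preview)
Your argument is correct and follows the same approach as the paper: reduce to the Euler-characteristic identity and then use that $G$ is connected. The paper merely asserts the connectedness of $G$ without justification, whereas you actually supply a proof via the connectedness of $T(D)$; this is an improvement, not a deviation.
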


\begin{proof}
This follows from the connectedness of $G$ so that we have

\begin{eqnarray*}
b_1(G) &=& b_0(G) - \chi (G) = 1 - \#\rm{nodes}(G) + \#\rm{edges}(G) \\
&=& 1  - \#\rm{components}(T^-(D)) - \#\rm{components}(T^+(D)) +  \#\rm{nodes}(T(D)) \\
&=& \Delta(D) \rm{.}
\end{eqnarray*}
\end{proof}

Just prior to posting on the arXiv, we heard from Tomomi Kawamura \cite{K1} that she has independently obtained several of the results in this paper, using entirely different methods.  We thank Tetsuya Abe and Cornelia van Cott for their comments on an earlier draft of this paper.

\section{Proof of main results}

We assume familiarity with the definition of the Khovanov chain complex defined from a knot diagram $D$, and with Rasmussen's paper \cite{R}.  We write $\F^j C^i (D)$ for Lee's perturbed chain complex with complex coefficients (where the TQFT is induced from the Frobenius algebra $\C \hookrightarrow \C[x]/(x^2 - 1)$), with the $\F^j$ representing the quantum filtration:

\[ \ldots \subseteq \F^{j+1}C^i  \subseteq \F^{j}C^i \subseteq \F^{j-1}C^i \subseteq \ldots \rm{,} \]

\noindent and the superscript $i$ denoting the homological grading:

\[ \partial_i : \F^j C^i \rightarrow \F^j C^{i+1}, \partial_i \partial_{i-1} = 0 \rm{.} \]

\noindent Similarly we write $\F^jH^i(D)$ for the homology of the chain complex $\F^jC^i(D)$.

There is a distinguished subspace of $C^0(D)$ which I shall write as $H(O(D)) \{ w(D) \}$; $O(D)$ being the oriented resolution of $D$ and $\{ w(D) \}$ being a shift in the quantum filtration by the writhe of $D$.  Here one can think either of $H$ as being Lee's TQFT functor or of $H(O(D))$ as being the perturbed Khovanov homology of the ($0$-crossing) diagram $O(D)$.

By Lee \cite{L} we know that

\begin{theorem}
Given a knot diagram $D$ with orientation $o$, there exist $\s_o, \s_{\bar{o}} \in H(O(D))\{ w(D) \} \leq C^0(D)$ such that $\partial_0 \s_o = \partial_0 \s_{\bar{o}} = 0$  Furthermore, the homology $\F^jH^i(D)$ is 2-dimensional and supported in homological grading $i=0$ with $H^0 (D) = < [\s_o] , [\s_{\bar{o}}] >$.
\end{theorem}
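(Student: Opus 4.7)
The plan is to follow Lee's diagonalisation argument. The Frobenius algebra $\C[x]/(x^2-1)$ is semisimple, and the natural thing to do is to change basis to the primitive idempotents $\mathbf{a} = (1+x)/2$ and $\mathbf{b} = (1-x)/2$, which satisfy $\mathbf{a}^2 = \mathbf{a}$, $\mathbf{b}^2 = \mathbf{b}$, $\mathbf{a}\mathbf{b}=0$. In this basis the TQFT becomes diagonal: a merge kills any pair of circles labelled with different idempotents, and a split duplicates the idempotent on each of the two outgoing circles. Using this observation I would first describe a new basis of $\F^j C^i(D)$ consisting of pairs (resolution, labelling of each circle by $\mathbf{a}$ or $\mathbf{b}$), in which the differential $\partial_i$ acts locally and very transparently.

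Next I would construct the canonical cycles. At the oriented resolution $O(D)$ the orientation $o$ of the knot induces an orientation on each Seifert circle; define $\s_o$ by labelling a Seifert circle $\mathbf{a}$ if it inherits (say) the counter-clockwise orientation and $\mathbf{b}$ otherwise, and define $\s_{\bar o}$ by the opposite rule. That $\partial_0 \s_o = 0$ then follows crossing-by-crossing: at each crossing of $D$ the two circles of $O(D)$ meeting there have opposite induced orientations (since the crossing would otherwise violate the orientation), so the corresponding labels are different, and both the merge and the split appearing in $\partial_0$ vanish by the idempotent calculus above. The same argument handles $\s_{\bar o}$, and the filtration shift $\{w(D)\}$ is tracked by bookkeeping $j$--gradings of the merge/split maps.

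For the homology computation the strategy is a cancellation (acyclic matching) argument on the remaining generators. Every non-canonical generator is one in which at least two circles of some resolution meet at a crossing with the ``wrong'' pair of labels in the idempotent basis (i.e.\ some pair compatible with the merge/split being an isomorphism rather than zero). Pairing such a generator with the result of flipping that crossing's resolution gives a Gaussian-elimination step, and one shows by induction on the number of crossings that after performing all such eliminations the only surviving chains are $\s_o$ and $\s_{\bar o}$. This forces $\F^j H^i(D) = 0$ for $i\neq 0$ and exhibits $[\s_o],[\s_{\bar o}]$ as a basis of $H^0(D)$; one still has to check that these two classes are genuinely linearly independent in homology, which follows because they have distinct leading labels at $O(D)$ and nothing else in the chain complex can have such a leading term after cancellation.

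The main obstacle is the last step: organising the cancellations into a coherent acyclic matching so that the induction terminates with exactly $\s_o$ and $\s_{\bar o}$ surviving. Writing down an explicit matching is essentially an exercise, but showing that no further cancellation is possible — equivalently, that these two classes are non-trivial and independent in $\F^jH^0(D)/\F^{j+1}H^0(D)$ — is where one must use that $D$ represents a single connected component (a knot), since otherwise the homology has higher rank. For multi-component links this is exactly the source of the extra $2^{\#\mathrm{components}-1}$ factor, and the proof has to be arranged to make the connectedness assumption visible, for example by choosing a spanning subtree of $T(D)$ when performing the matching.
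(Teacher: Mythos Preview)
The paper does not prove this theorem; it is quoted as a result of Lee \cite{L}, with the explicit chain-level description of the generators recorded immediately afterward as Definition~\ref{generators}. There is therefore no in-paper argument to compare your proposal against. Your outline is, broadly, Lee's own strategy: pass to the idempotent basis $\mathbf{a},\mathbf{b}$ of $\C[x]/(x^2-1)$, identify the canonical cycles supported on $O(D)$, and then cancel everything else in pairs.

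One point in your sketch is not correct and would cause the argument to fail as written. Your rule ``label a Seifert circle $\mathbf{a}$ if it is counter-clockwise and $\mathbf{b}$ otherwise'' does not guarantee that two circles meeting at a crossing carry different labels. Two Seifert circles adjacent at a crossing can very well have the \emph{same} rotational sense---this already happens for any braid closure, where all Seifert circles are concentric and co-oriented. The invariant that actually alternates across every crossing is the one in Definition~\ref{generators}: the rotational sense \emph{combined} with the mod~$2$ nesting depth. With that corrected labeling, the local vanishing of merges and splits in the idempotent basis goes through, and the rest of your cancellation outline is the standard route to Lee's result.
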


There is an explicit description of these generators at the chain level:

\begin{definition}
\label{generators}
The orientation $o$ on $D$ induces an orientation on $O(D)$.  For each circle $C$ in $O(D)$ we give a invariant which is the mod 2 count of the number of circles in $O(D)$ separating $C$ from infinity, to which we add $0$ (respectively $1$) if $C$ has the counter-clockwise (respectively clockwise) orientation.  We label $C$ with $v_- + v_+$ (respectively $v_- - v_+$) if the invariant is $0$ (respectively $1$) $\pmod{2}$.  This determines an element $\s_o \in H(O(D))\{ w(D) \}$, $\s_{\bar{o}}$ being given in the same way using the opposite orientation $\bar{o}$ on $D$.
\end{definition}

We know that, in Rasmussen's notation, $s(D)=s_{\rm{min}}(D) + 1$ and $s_{\rm{min}}(D)$ is the filtration grading of the highest filtered part of $H^0(D)$ to contain $[\s_o]$ (or equivalently $[\s_{\bar{o}}]$ - this interchangeability is taken as understood from now on).  This is the same as the filtration grading of the highest filtered part of $C^0 / im (d_{-1})$ containing $[\s_o]$.  It follows that

\begin{lemma}
\label{ronen}
Let $p : C^0(D) \rightarrow H(O(D)) \{ w(D) \}$ be the projection onto the vector space summand.  Then

\[ s_{\rm{min}}(D) \leq L(D) \]

\noindent where $L(D)$ is the filtration grading in $H(O(D)) \{ w(D) \}/im(p \circ d_{-1})$ of the highest filtered part containing $[s_o]$.  \qed
\end{lemma}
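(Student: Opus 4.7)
The plan is to transport a high-filtration representative of $\s_o$ from the larger quotient $C^0(D)/im(d_{-1})$ down to the smaller quotient $H(O(D))\{ w(D) \}/im(p \circ d_{-1})$ without decreasing its filtration grading. Writing $m := s_{\min}(D)$, the description of $s_{\min}(D)$ immediately preceding the lemma tells us that $[\s_o] \in \F^m(C^0(D)/im(d_{-1}))$, so there exists $x \in \F^m C^0(D)$ with $x - \s_o \in im(d_{-1})$. My task is to replace $x$ by an element of $\F^m H(O(D))\{ w(D) \}$ representing $\s_o$ modulo $im(p \circ d_{-1})$.

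The key observation, which I would isolate as a separate claim, is that the projection $p$ is filtration-preserving in the sense that $p(\F^j C^0(D)) \subseteq \F^j H(O(D))\{ w(D) \}$ for every $j$. Indeed, $C^0(D)$ splits as a direct sum of tensor powers of the underlying Frobenius algebra, indexed by the vertices of the Khovanov cube of resolutions lying in homological degree zero, with one tensor factor per circle in the corresponding resolution. The quantum filtration on $C^0(D)$ respects this decomposition: an element lies in $\F^j$ if and only if each of its summands does. The oriented resolution $O(D)$ is one such vertex, and the distinguished subspace $H(O(D))\{ w(D) \}$ is precisely the summand associated to it.

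With this in hand the lemma follows in one line. Apply $p$ to the element $x$ above to obtain $p(x) \in \F^m H(O(D))\{ w(D) \}$. Since $\s_o$ itself lies in $H(O(D))\{ w(D) \}$ we have $p(\s_o) = \s_o$, so $p(x) - \s_o = p(x - \s_o) \in p(im(d_{-1})) = im(p \circ d_{-1})$. Thus $[\s_o] = [p(x)]$ in $H(O(D))\{ w(D) \}/im(p \circ d_{-1})$ lies in $\F^m$, giving $L(D) \geq m = s_{\min}(D)$ as required. There is no serious obstacle here: the only nontrivial content is the filtered direct-sum decomposition of $C^0(D)$, which is where I would pause just long enough to confirm that we are splitting $C^0(D)$ as a direct sum of filtered subspaces and not merely as a vector space.
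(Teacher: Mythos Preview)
Your argument is correct and is exactly the computation the paper has in mind: the lemma is stated with a bare \qed, the preceding sentence having reduced $s_{\min}(D)$ to the filtration level of $[\s_o]$ in $C^0(D)/\mathrm{im}(d_{-1})$, and the passage to $H(O(D))\{w(D)\}/\mathrm{im}(p\circ d_{-1})$ is then the one-line projection argument you wrote out. Your isolation of the fact that $C^0(D)$ splits as a \emph{filtered} direct sum over resolutions (so that $p$ does not lower filtration) is the only point worth pausing on, and it is right.
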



\begin{proof}(of Theorem \ref{upperbound})
Given a knot diagram $D$ with orientation $o$, we write $n_+$, $n_-$ for the number of positive, negative crossings of $D$ respectively so that the writhe $w(D) = n_+ - n_-$.  Form the diagram $D^-$ by taking the oriented resolution at each of the positive crossings.  Note that diagram $D^-$ is also oriented with writhe $-n_-$.  Suppose there are $l$ components $D_1^-, D_2^-, \ldots, D_l^-$ of $D^-$ (where we mean components as a subset of the plane, so that the standard 2-crossing diagram of the Hopf link would be considered as a single component, for example) and suppose that $D_r^-$ has $n_r$ crossings for $1 \leq r \leq l$.

We observe that, up to quantum filtration shift by $\{ n_+ \}$, the map

\[ p \circ d_{-1} : C^{-1}(D) \rightarrow H(O(D)) \{w(D)\} \leq C^0(D) \]

\noindent can be identified with the map

\[ d_{-1} : C^{-1}(D^-) \rightarrow C^{0}(D^-) = H ( O(D^-) ) \{ -n_- \} \rm{.} \]

\noindent This latter map is in fact $\bigoplus_{r=1}^l d_{-1}^r \otimes \rm{id}^r$ where

\[ d_{-1}^r : C^{-1} ( D_r^{-} ) \rightarrow C^{0} ( D_r^{-} ) = H ( O(D_r^-) ) \{ -n_r \} \rm{,} \]

\noindent is the $(-1)$th differential in the chain complex $C^*(D_r^{-})$ and

\[ id^r : H(O(D^- \setminus D^-_r)) \{-n_- + n_r \} \rightarrow H(O(D^- \setminus D^-_r)) \{-n_- + n_r \} \]

\noindent is the identity map.

Inductively on $r$ we observe a canonical identification

\begin{eqnarray*}
\rm{coker}(\bigoplus_{r=1}^l (d_{-1}^r \otimes \rm{id}^r)) &=& \bigotimes_{r=1}^l \rm{coker}(d_{-1}^r) \\
&=& \bigotimes_{r=1}^l (H^0(D_r^-)) \rm{.}
\end{eqnarray*}

Now $\s_o = \s_1 \otimes \s_2 \otimes \cdots \otimes \s_l$, where $\s_r \in C^0(D_r^-)$ is either the element $\s_{o'}$ or $\s_{\bar{o'}}$ where we use $o'$ to stand for the induced orientation on the oriented resolution of $D_r^-$.  This is because the mod 2 invariant associated to each circle $C \subset O(D_r^{-})$ via Definition \ref{generators} differs by $0$ or $1$ from the invariant associated to $C \subset O(D)$ via Definition \ref{generators}, and it is the same difference for all circles of $O(D_r^-)$.


Suppose the number of components of $O(D_r^-)$ is $e_r$.  We observe that $\F^{e_r - n_r} C^0 (D_r^{-})$ is the highest filtered part of $C^0 (D_r^-)$ to be non-zero and is $1$-dimensional.  Therefore by Lemma 3.5 \cite{R}, $[\s_r]$ has filtered degree less than or equal to $e_r - n_r -2$ in $H^0(D_r^-)$.

We compute for $L(D)$ in Lemma \ref{ronen}:

\begin{eqnarray*}
L(D) &=& n_+ + \sum_{r=1}^l e_r - n_r -2 \\
&=& n_+ -n_- + \#\rm{nodes}(T(D)) - 2\#\rm{components}(T^-(D)) \\
&=& \#\rm{nodes}(T(D)) - 2\#\rm{components}(T^-(D)) + w(D) \rm{.}
\end{eqnarray*}

Hence we have

\begin{eqnarray*}
s(D) &=& s_{\rm{min}}(D) + 1 \leq L(D) + 1 \\
&=&  \#\rm{nodes}(T(D)) - 2\#\rm{components}(T^-(D)) + w(D) + 1 = U(D) \rm{.}
\end{eqnarray*}
\end{proof}

\begin{proof}(of Theorem \ref{trees})
Given an oriented knot diagram $D$, let $\bar{D}$ be the mirror image of $D$.  It is then easy to check that

\[ 2\Delta(D) = U(D) + U(\bar{D}) \rm{.} \]

So we have

\begin{eqnarray*}
s(D) = -s(\bar{D}) \geq -U(\bar{D}) = U(D) - 2\Delta(D) \rm{.}
\end{eqnarray*}
\end{proof}

\section{Generalizations to links}
\label{linksection}

Given an $r$-component link $L \subset S^3$, let $G(L)$ be the genus of a connected minimal-genus smooth surface in the $4$-ball which has $L$ as boundary.  We extend the definition of the slice genus $g^*$ to links by defining

\[ g^*(L) = G(L) + \frac{1}{2} - \frac{r}{2} \in \frac{1}{2}\Z \rm{.} \]

The definition of the $s$-invariant for links as found in \cite{BW} is such that the proof of Theorem \ref{upperbound} carries through unchanged to this setting.  Also by \cite{BW} we know that

\begin{enumerate}
\item $s(L) \leq 2g^*(L) \rm{,}$
\item $s(L) + s(\bar{L}) \geq 2-2r \rm{.}$
\end{enumerate}

Hence we also obtain a version of Corollary \ref{benny} for links:

\begin{corollary}
Suppose $D$ is a diagram of an $r$-component link and $T(D)$ and $T^+(D)$ are the associated graphs, then

\[ 2g^*(D) \geq w(D) - \#\rm{nodes}(T(D)) + 2\#\rm{components}(T^+(D))  - 2r + 1 \rm{.} \]
\end{corollary}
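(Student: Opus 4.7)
The plan is to mimic the proof of Theorem \ref{trees}, but replacing the knot identity $s(D) = -s(\bar{D})$ with its weaker link analogue $s(L) + s(\bar{L}) \geq 2 - 2r$ from \cite{BW}, and then composing with $s(L) \leq 2g^*(L)$.

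First, I would apply the link version of Theorem \ref{upperbound} (which the author asserts holds by the same proof) to the mirror diagram: $s(\bar{L}) \leq U(\bar{D})$. Next, I would verify the combinatorial identity
\[ U(D) + U(\bar{D}) = 2\Delta(D), \]
exactly as in the proof of Theorem \ref{trees}. This is purely combinatorial: mirroring negates the writhe, preserves the node set of $T(D)$, and swaps the roles of $T^+(D)$ and $T^-(D)$, so the identity does not require $D$ to be a knot diagram.

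Combining the two ingredients,
\[ s(L) \geq 2 - 2r - s(\bar{L}) \geq 2 - 2r - U(\bar{D}) = U(D) - 2\Delta(D) + 2 - 2r. \]
Together with $s(L) \leq 2g^*(L)$, this gives
\[ 2g^*(L) \geq U(D) - 2\Delta(D) + 2 - 2r. \]
Finally, I would expand the right-hand side using Definitions \ref{maindefn} and \ref{errordefn}:
\[ U(D) - 2\Delta(D) = w(D) - \#\mathrm{nodes}(T(D)) + 2\#\mathrm{components}(T^+(D)) - 1, \]
which, when substituted, yields exactly the claimed inequality.

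There is really no hard step here; the proof is a formal bookkeeping exercise once one accepts that (i) the proof of Theorem \ref{upperbound} given in the knot case goes through verbatim for the link $s$-invariant of \cite{BW}, and (ii) the inequality $s(L) + s(\bar{L}) \geq 2 - 2r$ from \cite{BW} replaces the equality $s(K) = -s(\bar{K})$ used in Theorem \ref{trees}. The only mild subtlety is tracking the extra $2 - 2r$ correction through the computation, which shifts the ``$-1$'' of the knot version of Corollary \ref{benny} to ``$-2r + 1$''.
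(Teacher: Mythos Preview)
Your proof is correct and follows the same approach as the paper: apply the link version of Theorem~\ref{upperbound} to the mirror $\bar{D}$, combine with $s(L)+s(\bar{L})\geq 2-2r$ and $2g^*(L)\geq s(L)$, and then compute $-U(\bar{D})$. The only cosmetic difference is that you route the final computation through the identity $U(D)+U(\bar{D})=2\Delta(D)$, whereas the paper expands $-U(\bar{D})$ directly using $w(\bar{D})=-w(D)$ and $T^-(\bar{D})=T^+(D)$.
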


\begin{proof}
We have

\begin{eqnarray*}
2g^*(D) &\geq& s(D) \\
&\geq& 2 - 2r - s(\bar{D}) \\
&\geq& 2 - 2r - U(\bar{D}) \\
&=& w(D) - \#\rm{nodes}(T(D)) + 2\#\rm{components}(T^+(D))  - 2r + 1  \rm{.}
\end{eqnarray*}
\end{proof}


\begin{thebibliography}{99999}
\bibitem[BW]{BW} A. Beliakova and S. Wehrli, Categorification of the colored Jones polynomial and the Rasmussen invariant of links, to appear in Canada J. Math.
\bibitem[K1]{K1} T. Kawamura, {\em An estimate of the Rasmussen invariant for links}, preprint\bibitem[K2]{K2} T. Kawamura, {\em The Rasmussen invariants and the sharper slice-Bennequin inequality on knots}, Topology 46 (2007) 29-38
\bibitem[L]{L} E. S. Lee {\em An endomorphism of the Khovanov invariant}, Adv. Math. 197 (2005) 554-586
\bibitem[P]{P} O. Plamenevskaya, {\em Transverse knots and Khovanov homology}, Math. Res. Lett. (2006) 571-586
\bibitem[R]{R} J. Rasmussen, {\em Khovanov homology and the slice genus}, to appear in Inventiones Mathematicae
\bibitem[Sh]{Sh} A. Shumakovitch, {\em Rasmussen invariant, slice-Bennequin inequality, and sliceness of knots}, J. Knot Theory Ramifications (2007), 1403-1412
\bibitem[Sta]{stallings} J. Stallings, {\em Constructions of fibered knots and links},  Algebraic and Geometric Topology, Proc. Sympos. Pure Math., vol. XXXII.2, Amer. Math. Soc., 
Providence, RI, 1978, pp. 55-60
\bibitem[Sto]{St} A. Stoimenow, {\em Some examples related to knot sliceness}, J. Pure Applied Algebra 210(1) (2007), 161-175
\bibitem[Tr]{Tr} P. Traczyk, {\em A combinatorial formula for the signature of alternating diagrams}, Fund. Math. 184 (2004) 311-316
\end{thebibliography}
\end{document}